\newtheorem*{thm*}{Theorem}
\newtheorem{thm}{Theorem}
\newtheorem{remark}{Remark}
\newtheorem{cor}{Corollary}
\newtheorem*{cor*}{Corollary}
\newtheorem*{ack}{Acknowledgement}
\newtheorem*{prop*}{Proposition}
\newtheorem{prop}{Proposition}
\newenvironment{theorem}[2][Theorem]{\begin{trivlist}
\item[\hskip \labelsep {\bfseries #1}\hskip \labelsep {\bfseries #2}]}{\end{trivlist}}
\begin{document}

\def\L{ {\mathcal{L}}}
\def\d{ \partial }
\def\Na{{\mathbb{N}}}

\def\Z{{\mathbb{Z}}}

\def\IR{{\mathbb{R}}}

\newcommand{\E}[0]{ \varepsilon}

\newcommand{\la}[0]{ \lambda}

\newcommand{\s}[0]{ \mathcal{S}}

\newcommand{\AO}[1]{\| #1 \| }

\newcommand{\BO}[2]{ \left( #1 , #2 \right) }

\newcommand{\CO}[2]{ \left\langle #1 , #2 \right\rangle}

\newcommand{\R}[0]{ \IR\cup \{\infty \} }

\newcommand{\co}[1]{ #1^{\prime}}

\newcommand{\p}[0]{ p^{\prime}}

\newcommand{\m}[1]{   \mathcal{ #1 }}

\newcommand{ \A}[1]{ \left\| #1 \right\|_H }

\newcommand{\B}[2]{ \left( #1 , #2 \right)_H }

\newcommand{\C}[2]{ \left\langle #1 , #2 \right\rangle_{  H^* , H } }

 \newcommand{\HON}[1]{ \| #1 \|_{ H^1} }

\newcommand{ \Om }{ \Omega}

\newcommand{ \pOm}{\partial \Omega}

\newcommand{\D}{ \mathcal{D} \left( \Omega \right)}

\newcommand{\DP}{ \mathcal{D}^{\prime} \left( \Omega \right)  }

\newcommand{\DPP}[2]{   \left\langle #1 , #2 \right\rangle_{  \mathcal{D}^{\prime}, \mathcal{D} }}

\newcommand{\PHH}[2]{    \left\langle #1 , #2 \right\rangle_{    \left(H^1 \right)^*  ,  H^1   }    }

\newcommand{\PHO}[2]{  \left\langle #1 , #2 \right\rangle_{  H^{-1}  , H_0^1  }}

 \newcommand{\HO}{ H^1 \left( \Omega \right)}

\newcommand{\HOO}{ H_0^1 \left( \Omega \right) }

\newcommand{\CC}{C_c^\infty\left(\Omega \right) }

\newcommand{\N}[1]{ \left\| #1\right\|_{ H_0^1  }  }

\newcommand{\IN}[2]{ \left(#1,#2\right)_{  H_0^1} }

\newcommand{\INI}[2]{ \left( #1 ,#2 \right)_ { H^1}}

\newcommand{\HH}{   H^1 \left( \Omega \right)^* }

\newcommand{\HL}{ H^{-1} \left( \Omega \right) }

\newcommand{\HS}[1]{ \| #1 \|_{H^*}}

\newcommand{\HSI}[2]{ \left( #1 , #2 \right)_{ H^*}}

\newcommand{\Ov}{ \overline{ \Omega}}
\newcommand{\WO}{ W_0^{1,p}}
\newcommand{\w}[1]{ \| #1 \|_{W_0^{1,p}}}

\newcommand{\ww}{(W_0^{1,p})^*}

\title{Supercritical elliptic problems on a perturbation of the ball}
\author{Craig Cowan  \\
{\it\small Department of Mathematical Sciences}\\
{\it\small University of Alabama in Huntsville}\\
{\it\small 258A Shelby Center}\\
\it\small Huntsville, AL 35899 \\
{\it\small ctc0013@uah.edu} }

\maketitle


\vspace{3mm}

\begin{abstract}   We examine the H\'enon equation $ -\Delta u =|x|^\alpha u^p$ in $ \Omega \subset \IR^N$ with $u=0$ on $ \pOm$   where $ 0 < \alpha$. We show there  exists a sequence $ \{p_k\}_k \subset [ \frac{N+2}{N-2}, p_{\alpha}(N)]$ with $p_1 < p_2 <p_3 < ...$,
   $ p_k \nearrow p_{\alpha}(N)$ such that for any $ \frac{N+2}{N-2} \le  p < p_{\alpha}(N)$, which avoids $ \{p_k\}_k $,  there exists a positive classical solution of the H\'enon equation, provided $ \Omega$ is a sufficiently small  perturbation of the unit ball.

We also examine the Lane-Emden-Fowler equation in the case of an exterior domain; ie.  $ -\Delta u = u^p$ in $ \Omega$, an exterior domain, with $ u=0 $ on $ \pOm$.  We show the existence of  $ \frac{N+2}{N-2} \le p_1 < p_2 < p_3<...$ with $ p_k \rightarrow \infty$ such that if $ \frac{N+2}{N-2} < p$, which avoids $\{p_k\}_k$,
 then there exists
 a positive \emph{fast decay} classical solution,  provided $ \Omega$ is a sufficiently small perturbation of the exterior of the unit ball.

\end{abstract}

\noindent
{\it \footnotesize 2010 Mathematics Subject Classification}. {\scriptsize }\\
{\it \footnotesize Key words:            }. {\scriptsize }

\section{Introduction}

In this note we are interested in the existence of positive classical solutions of the H\'enon equation
 \begin{equation} \label{dom_pe}
 \left\{
 \begin{array}{lcl}
\hfill   -\Delta u  &=& |x|^\alpha u^p \quad  \mbox{ in $\Omega$},   \\
\hfill u&=& 0 \qquad \quad \mbox{on $\pOm$,}
\end{array}\right.
  \end{equation} where $ \Omega$ is a  bounded domain in $ \IR^N$ with $ N \ge 3$   and where $ 0 < \alpha$ and $ 1<p$.

  For $ 1 <p< \frac{N+2}{N-2}$ it is known that $ H_0^1(\Omega)$ is compactly embedded in $L^{p+1}(\Omega)$.   This easily shows that $H_0^1(\Omega)$ is compactly embedded in $L^{p+1}(\Omega, |x|^\alpha dx)$ and hence a standard minimization argument shows the existence of a nonnegative nonzero solution of (\ref{dom_pe}).  Applying the maximum principle and elliptic regularity theory shows that $ u$ is a classical positive solution of (\ref{dom_pe}).     For $ p > \frac{N+2}{N-2}$ one loses the compact embedding of $H_0^1(\Omega)$ into $L^{p+1}(\Omega)$ and the above proof is no longer valid.   This is precisely the case we are interested in.

  In what follows $B$  will always denote the unit ball in $ \IR^N$ centered at the origin, ie. $B:=\{ x \in \IR^N: |x|<1 \}$.
  We begin with a non existence result. A classical Pohozaev argument shows  there is no positive  classical solution of (\ref{dom_pe}) provided $ \Omega$ is a smooth bounded star shaped domain in $ \IR^N$ with $ p > \frac{N+2 + 2 \alpha}{N-2}=:p_\alpha(N)$.   This suggests that one may hope to prove the existence of a positive classical solution of (\ref{dom_pe})  in the case where $ 1<p<p_\alpha(N)$, and indeed one has  the following result,

  \begin{thm*} \textbf{A}. (Ni \cite{Ni})  Suppose  $ N \ge 3$, $ 0  < \alpha$, $ \Omega=B$ and    $ 1 <p< p_\alpha(N)$. Then there exists a positive classical radial solution of (\ref{dom_pe}).

  \end{thm*}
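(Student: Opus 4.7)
The natural approach is to work in the subspace $H_{\mathrm{rad}} := H^1_{0,\mathrm{rad}}(B)$ of radial functions in $H^1_0(B)$ and solve a constrained minimization problem, exploiting the fact that on $H_{\mathrm{rad}}$ one recovers compactness of the weighted embedding into $L^{p+1}(B,|x|^\alpha dx)$ up to (but not including) the Pohozaev exponent $p_\alpha(N)$.

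The decisive ingredient is the radial pointwise bound
$$|u(x)| \le C\,|x|^{-(N-2)/2}\,\|\nabla u\|_{L^2(B)}, \qquad x \in B\setminus\{0\},$$
valid for every $u \in H_{\mathrm{rad}}$; it follows by writing $u(r) = -\int_r^1 u'(s)\,ds$, applying Cauchy--Schwarz with weight $s^{N-1}$, and using the Dirichlet boundary condition. From this I would deduce the compact embedding $H_{\mathrm{rad}} \hookrightarrow L^{p+1}(B,|x|^\alpha dx)$ for $1 < p < p_\alpha(N)$: given a bounded sequence $\{u_n\} \subset H_{\mathrm{rad}}$, extract a subsequence with $u_n \rightharpoonup u$ in $H^1_0(B)$ and $u_n \to u$ a.e.\ on $B$ (via the usual Rellich embedding into $L^2$). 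The pointwise bound yields the uniform majorization $|x|^\alpha |u_n(x)|^{p+1} \le C\,|x|^{\alpha - (N-2)(p+1)/2}$, and an elementary computation in polar coordinates shows the right-hand side is integrable on $B$ precisely when $p < p_\alpha(N)$. Dominated convergence then gives strong convergence in $L^{p+1}(B,|x|^\alpha dx)$.

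With compactness in hand I would minimize
$$S := \inf\left\{\, \int_B |\nabla u|^2\,dx \; : \; u \in H_{\mathrm{rad}},\ \int_B |x|^\alpha |u|^{p+1}\,dx = 1 \,\right\},$$
extract a nonnegative minimizer $u_0$ (replace by $|u_0|$ if necessary, which is still admissible and does not increase the numerator), and apply the Lagrange multiplier rule so that a suitable scalar multiple of $u_0$ is a radial nonnegative weak solution of $-\Delta u = |x|^\alpha u^p$ in $B$ with zero Dirichlet data; Palais' principle of symmetric criticality promotes this to a weak solution on all of $H^1_0(B)$. Standard elliptic bootstrapping (the coefficient $|x|^\alpha$ is bounded on $B$) together with the strong maximum principle upgrade $u_0$ to a positive classical radial solution.

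The main obstacle is establishing the compact weighted embedding; once the arithmetic identity $p < p_\alpha(N) \Longleftrightarrow \alpha + N - (N-2)(p+1)/2 > 0$ is identified as the integrability threshold for the dominating function, the remainder of the argument is standard variational machinery.
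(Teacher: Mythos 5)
Your argument is essentially the paper's own proof: the paper sketches exactly this route, namely the compact embedding of $H^1_{0,\mathrm{rad}}(B)$ into $L^{p+1}(B,|x|^\alpha dx)$ for $1<p<p_\alpha(N)$ (driven by the radial pointwise decay estimate) followed by a standard constrained minimization, symmetric criticality, and elliptic regularity plus the maximum principle. Your exponent bookkeeping at the threshold $p_\alpha(N)$ is correct, so the proposal is fine as written.
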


\begin{proof} The idea of the proof is to show that $H_{0,rad}^1(B):=\{ u \in H_0^1(B): u \mbox{ is radial} \}$ is compactly embedded in the weighted space $L^{p+1}(B, |x|^\alpha dx)$ for  $ 1 <p< p_\alpha(N)$.    One can then perform a standard minimization argument to obtain a positive solution of (\ref{dom_pe}).   One should note that in the above approach the radial symmetry of the domain is crucial.

\end{proof}

\begin{remark}   An alternate proof of Theorem A, using a change of variables, is available.   This approach is taken from  \cite{MEMS_pull} (and was also independently noticed in \cite{Grossi_2})  where it was used to analyze various numerically observed phenomena related to the extremal solution associated with equations  of the form
 \begin{equation*}
\left\{ \begin{array}{ll}  
-\Delta u =   \lambda (1+\frac{\alpha}{2})^2|x|^\alpha f(u)  & \hbox{in } B.\\
 \quad \quad \,\, u=0 & \hbox{on }\partial B, \end{array} \right.
\end{equation*}   See the appendix for details.

\end{remark}

After the work of Ni \cite{Ni} the H\'enon equation did not receive much attention until  \cite{SMETS}, where they examined (\ref{dom_pe}) in the case of  $ \Omega=B$.  They showed, among many results, that for $ 1 <p< \frac{N+2}{N-2}$ the ground state solution is non radial provided $ \alpha>0$ is sufficiently large.  Since this work there has been many related works,  see \cite{Badiale, Byeon, Cao, smets}, which show various results regarding properties of solutions to (\ref{dom_pe}) in the case where $ \Omega=B$. Some of these works include certain ranges of $ p > \frac{N+2}{N-2}$.

      After the majority of this work was completed we learned of the  recent interesting work \cite{Grossi_1} where they examine (\ref{dom_pe}) for general bounded domains containing the origin.   They show many interesting results, one of which is the existence of positive solutions provided $ p = \frac{N+2+2 \alpha}{N-2}-\E$ where $ \E>0$ is small.   In addition they have another recent preprint \cite{Grossi_2} where they examine (\ref{dom_pe}) on $ \IR^N$ and obtain many interesting results.   We also mention the very interesting related works \cite{add_1,add_2} from which we borrowed many ideas.

   We now state our first result.
  \begin{thm}  \label{main}  Suppose $ 0 < \alpha$.  Then there exists a sequence \\ $ \{p_k\}_k \subset [ \frac{N+2}{N-2}, p_{\alpha}(N)]$ with $p_1 < p_2 <p_3 < ...$,
   $ p_k \nearrow p_{\alpha}(N)$ such that for any $ \frac{N+2}{N-2} \le p < p_{\alpha}(N)$, which avoids $ \{p_k\}_k $,  there exists a positive classical solution of (\ref{dom_pe}), provided $ \Omega$ is a sufficiently small smooth perturbation of the unit ball (which we make more precise later).

  \end{thm}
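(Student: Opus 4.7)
The plan is to combine Ni's radial solution $u_p$ on $B$ with an implicit function theorem applied to the pulled-back equation on $B$. Parametrize a family of small smooth perturbations of $B$ by diffeomorphisms $\Psi_t : \bar B \to \bar\Omega_t$ with $\Psi_0 = \mathrm{id}$ and $t$ small. If $u$ solves (\ref{dom_pe}) on $\Omega_t$, then $v := u \circ \Psi_t$ solves a pulled-back equation on $B$ of the schematic form
\[
F(v,t) := -\mathrm{div}\bigl(A_t\nabla v\bigr) - J_t\, |\Psi_t(x)|^\alpha v^p = 0 \text{ in } B,\quad v=0 \text{ on } \partial B,
\]
with $A_t, J_t$ smooth in $t$ and $A_0 = I$, $J_0 = 1$, so $F(u_p,0)=0$. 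Working in $X:=\{v\in C^{2,\gamma}(\bar B):v|_{\partial B}=0\}$ and $Y:=C^{0,\gamma}(\bar B)$, the map $F:X\times(-\epsilon_0,\epsilon_0)\to Y$ is smooth, and invertibility of the linearization
\[
L_p\phi := D_v F(u_p,0)\phi = -\Delta\phi - p|x|^\alpha u_p^{p-1}\phi
\]
would yield, via the implicit function theorem, a branch $v_t$ near $u_p$; then $u := v_t \circ \Psi_t^{-1}$ gives a solution on $\Omega_t$, which is positive and classical for $|t|$ small by $C^{2,\gamma}$-closeness to $u_p$ together with $u_p>0$ in $B$ and $\partial_\nu u_p<0$ on $\partial B$.

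The bad set $\{p_k\}$ is defined as those $p\in[\tfrac{N+2}{N-2},p_\alpha(N))$ for which $\ker L_p \ne \{0\}$. Since $u_p$ is bounded, $L_p$ is a compact perturbation of $-\Delta$ on $X$, hence Fredholm of index zero, so invertibility is equivalent to triviality of the kernel in $H^1_0(B)$. I would analyze the kernel via the spherical harmonics decomposition $\phi=\sum_{k\ge 0}\phi_k(r) Y_k(\theta)$: for each angular mode $k$ one gets a radial eigenvalue problem
\[
-\phi''-\tfrac{N-1}{r}\phi'+\tfrac{k(k+N-2)}{r^2}\phi - p\,r^\alpha u_p^{p-1}\phi = \lambda\phi \text{ on } (0,1),\ \phi(1)=0,\ \phi \text{ regular at } 0,
\]
and $p\in\{p_k\}$ exactly when some such problem admits $\lambda=0$.

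To show $\{p_k\}$ has no accumulation inside $[\tfrac{N+2}{N-2},p_\alpha(N))$ I would argue in two stages: (i) on any compact subinterval of $[\tfrac{N+2}{N-2},p_\alpha(N))$, $\|u_p\|_\infty$ is bounded uniformly (this is where the change of variables from the Remark is useful, turning the problem into a subcritical one on a transformed ball so that minimizers depend continuously on $p$), so the centrifugal term $k(k+N-2)/r^2$ dominates the potential for $k\ge K_0$ and those modes contribute no zero eigenvalue; (ii) $p\mapsto u_p$ is a real-analytic branch (analytic implicit function theorem applied in the radial setting where the operator is Fredholm), hence for each of the finitely many modes $k<K_0$ the lowest eigenvalue is a real-analytic function of $p$, and either identically zero (which can be ruled out by considering a specific $p$, or by noting sign of the eigenvalue at the endpoint) or vanishes only on an isolated set. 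Together this gives finiteness on each compact subinterval, so the degenerate values form a sequence accumulating at most at $p_\alpha(N)$; padding with a sequence tending to $p_\alpha(N)$ if necessary gives the stated $\{p_k\}$.

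The main obstacle is step (i)-(ii), in particular establishing analyticity and uniform bounds for the whole family $u_p$ across the supercritical range up to $p_\alpha(N)$: the radial minimization that produced $u_p$ becomes subcritical only after the change of variables, and one needs to carefully transport analyticity and compactness through that transformation. Once this spectral picture is in place, the implicit function theorem step and the verification of positivity and classical regularity are routine.
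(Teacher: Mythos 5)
Your overall strategy coincides with the paper's (perturb off Ni's radial solution $v_p$, declare the bad set to be the degenerate exponents, and prove discreteness via analyticity plus uniform bounds), but two of your steps have genuine gaps. First, the claim that $F$ is smooth (even $C^1$) from $C^{2,\gamma}_0(\overline B)\times(-\epsilon_0,\epsilon_0)$ to $C^{0,\gamma}(\overline B)$ fails for $0<\alpha<1$: differentiating $|x+t\psi(x)|^\alpha$ in $t$ produces a factor of order $|x+t\psi(x)|^{\alpha-1}$, which is unbounded near the point where $x+t\psi(x)=0$, so the implicit function theorem as you invoke it only covers $\alpha\ge 1$. This is precisely why the paper abandons the IFT and instead runs a contraction-mapping argument (Theorem 1') that requires only continuity in $t$; your step can be repaired in the same way, but as written it does not prove the theorem for the full range $\alpha>0$.

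The more serious gap is in your spectral step (ii). To get a real-analytic branch $p\mapsto u_p$ from the analytic implicit function theorem you need invertibility of the radial linearization (Fredholmness is not enough), i.e. radial non-degeneracy of $u_p$ for every $p$ in the range; this is itself a key unproved ingredient, which the paper obtains through the change of variables $u(r)=(1+\frac{\alpha}{2})^{2/(p-1)}\tilde u(r^{1+\alpha/2})$, reducing to the Lane--Emden problem in the fractional dimension $N(\alpha)$ and invoking Pacella's non-degeneracy result there (and even then the analyticity of $p\mapsto\nu(p)$ is taken from Dancer-type arguments as in del Pino--Wei, not from a routine IFT). Moreover, "the lowest eigenvalue of each mode" is not the right object: for $k=0$ the lowest eigenvalue is negative and it is a higher eigenvalue that could vanish, and for $k\ge1$ you would in principle have to track every eigenvalue branch that can cross zero. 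The paper sidesteps this bookkeeping by passing to the weighted operator $\tilde L=r^2(-\Delta-p r^\alpha v_p^{p-1})$ on $L^2(B,|x|^{-2}dx)$ and using the mountain-pass structure (radial Morse index at most one) to show $\tilde L$ has exactly one negative eigenvalue $\nu(p)$, so degeneracy in a mode $k\ge1$ is equivalent to the single equation $\nu(p)=-\lambda_k$. Finally, ruling out an identically vanishing branch "by considering a specific $p$ or the sign at the endpoint" is not an argument; the paper does real work here, proving $\nu(p)\to0$ as $p\searrow1$ by a rescaling/blow-up argument that bounds $\|v_p\|_{L^\infty}^{p-1}$ near $p=1$, and combining this non-constancy with boundedness of $\nu$ on $[1,p_\alpha(N)-\E)$ and analyticity to exclude accumulation of the bad set inside $[\frac{N+2}{N-2},p_\alpha(N))$. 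Until these points are supplied, the proposal is a plausible outline rather than a proof.
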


  Of course it would  desirable to extend the above result to some non symmetric  domains  which contain the origin but which are not perturbations of the unit ball.   In Proposition \ref{non-origin} we show there are domains such that (\ref{dom_pe}) does not have a positive classical solution.  This result is fairly trivial since the domains we consider don't contain the origin.   It would be interesting to see if one can obtain non existence results on domains which contain the origin.     Of course one would could not obtain a result for the full range of $ p < p_{\alpha}(N)$ after considering the above mentioned results obtained in \cite{Grossi_1}.

   We now discuss some equations related to (\ref{dom_pe}) and which have received attention recently.    Suppose that $u(x)$ is a positive solution of (\ref{dom_pe}) on some bounded connected domain $ 0 \in \Omega$ and we  let $ v(x)$ denote the Kelvin transform of $u(x)$, ie. $ v(x)= u(\frac{x}{|x|^2})$. Then $v$ is a solution of

   \begin{equation} \label{exterior}
 \left\{
 \begin{array}{lcl}
\hfill   -\Delta v(x)  &=& |x|^{ \beta} v(x)^p \quad  \mbox{ in $\tilde{\Omega}$},   \\
\hfill u&=& 0 \qquad \quad \quad  \mbox{on $\partial \tilde{\Omega}$,}
\end{array}\right.
  \end{equation}  where $\beta:= (N-2)p-N-2-\alpha$ and $ \tilde{\Omega}$ is the transformed domain, which is an exterior domain.     Also note that since $u$ is a classical solution we have
  $v(x)  \le \frac{C}{|x|^{N-2}}$ for all $ x \in \tilde{\Omega}$.

  Recently there has been a lot of interest in supercritical Lane-Emden-Fowler equations on exterior domains, ie. equation (\ref{exterior}) in the case where $ \beta=0$,  $ \tilde{\Omega}$ is an exterior domain in $ \IR^N$ and $ p > \frac{N+2}{N-2}$; see \cite{fast_slow, Davila_del_first, delpino_1}.     Note that when $ p > \frac{N}{N-2}$ we have $ \frac{2}{p-1} < N-2$ which motivates calling a solution $v$ of (\ref{exterior}) which satisfies $O(|x|^\frac{-2}{p-1})$ (resp. $O(|x|^{2-N})$) at infinity a \emph{slow decay} (resp. \emph{fast decay}) solution.     In \cite{fast_slow} it was shown there exists infinitely many slow decay solutions of (\ref{exterior}) for any $ p > \frac{N+2}{N-2}$ and there exists a fast decay solution for $  p > \frac{N+2}{N-2}$ but sufficiently close to the critical exponent.

We now state out second result.
\begin{thm} \label{main_2} There exists $ \frac{N+2}{N-2} \le p_1 < p_2 <p_3 < ...$ with $ p_k \rightarrow \infty$ such that for all $ \frac{N+2}{N-2} <p $,  which avoids $ \{p_k\}_k$, there exists a smooth positive  \emph{fast decay} solution of (\ref{exterior}) in the case of $ \beta=0$ provided $ \tilde{\Omega}$ is a smooth exterior domain which is a sufficiently small  perturbation of the exterior of the unit ball.
\end{thm}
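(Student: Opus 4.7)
My plan is to reduce Theorem \ref{main_2} to Theorem \ref{main} via the Kelvin transform, exploiting exactly the correspondence between the H\'enon equation on a bounded domain containing the origin and the Lane--Emden--Fowler equation on an exterior domain that was recalled in the introduction. The main point is that, given $p > \frac{N+2}{N-2}$, setting $\alpha = \alpha(p) := (N-2)p-(N+2) > 0$ makes the weight $\beta = (N-2)p - N - 2 - \alpha$ equal to zero. A direct calculation shows that with this choice $p_\alpha(N) = 2p - \frac{N+2}{N-2}$, so the condition $p < p_\alpha(N)$ required by Theorem \ref{main} reduces to $p > \frac{N+2}{N-2}$ and is therefore automatic; moreover $p \geq \frac{N+2}{N-2}$ is exactly the range considered in Theorem \ref{main_2}.

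Given $\tilde\Omega$, a small smooth perturbation of the exterior of $B$, I would let $\Omega$ be the image of $\tilde\Omega$ under the inversion $x \mapsto x/|x|^2$. Since this map is a smooth diffeomorphism away from the origin and infinity and maps the exterior of $B$ to $B \setminus \{0\}$, the domain $\Omega$ is a small smooth perturbation of $B$ containing the origin in its interior; the smallness is preserved in any reasonable topology (e.g., $C^{k,\alpha}$ graphs of $\partial\Omega$ over $\partial B$). Applying Theorem \ref{main} to $\Omega$ with the corresponding $\alpha = \alpha(p)$, provided $p$ avoids the exceptional sequence there, yields a positive classical solution $u$ of $-\Delta u = |x|^\alpha u^p$ in $\Omega$ with $u = 0$ on $\partial\Omega$.

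I would then define $v(x) := |x|^{2-N} u(x/|x|^2)$. The standard Kelvin identity $\Delta v(x) = |x|^{-N-2}\,\Delta u(x/|x|^2)$, combined with the equation for $u$ and the choice $\beta = 0$, shows $-\Delta v = v^p$ in $\tilde\Omega$ with $v = 0$ on $\partial\tilde\Omega$. Because $u$ is bounded near the origin (as a classical solution), $v(x) = O(|x|^{2-N})$ as $|x|\to\infty$, which is precisely the fast decay.

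It remains to assemble the sequence $\{p_k\}$. Define the exceptional set
\[
E := \Bigl\{\, p > \tfrac{N+2}{N-2} \,:\, p \text{ lies in the Theorem \ref{main} exceptional sequence for } \alpha = \alpha(p)\,\Bigr\}.
\]
Since $\alpha(p)$ is continuous and strictly increasing and each Theorem \ref{main} exceptional sequence is contained in a bounded interval depending on $\alpha$, $E$ is at most countable and can only accumulate at $+\infty$; enumerating in increasing order gives the required $p_1 < p_2 < \cdots$ with $p_k \to \infty$. The principal technical obstacle I anticipate is purely bookkeeping: verifying that the Kelvin image of a ``sufficiently small perturbation of the exterior of $B$'' really is a ``sufficiently small perturbation of $B$'' in the specific topology under which the smallness hypothesis of Theorem \ref{main} is formulated. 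Once this correspondence is made quantitative, Theorem \ref{main_2} follows immediately from Theorem \ref{main}.
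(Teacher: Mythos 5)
Your overall route is the paper's route: reduce to the H\'enon problem on a perturbed ball via the Kelvin transform with the weight exponent $\alpha(p)=p(N-2)-N-2$ (so that $\beta=0$ and $p<p_{\alpha(p)}(N)$ is automatic for $p>\frac{N+2}{N-2}$), get a positive solution there, and transform back, reading off fast decay from the boundedness of $u$ near the origin. All of that is fine. The genuine gap is in how you assemble the exceptional sequence. You cannot use Theorem \ref{main} as a black box, because its exceptional sequence depends on $\alpha$, and here $\alpha=\alpha(p)$ varies with $p$: your set $E=\{p:\ p\in S_{\alpha(p)}\}$ is a \emph{diagonal} of an uncountable family of countable sets, and countability of each fiber $S_\alpha$ tells you nothing about $E$. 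Nothing in the statement of Theorem \ref{main} controls how the sequences $S_\alpha$ move as $\alpha$ varies; a priori the exceptional values could track $p$, making $E$ contain whole intervals. Your appeal to continuity and monotonicity of $\alpha(p)$ and to boundedness of each $S_\alpha$ does not close this, so the step ``$E$ is at most countable and accumulates only at $+\infty$'' is unsupported.

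What the paper does instead is work directly with the one-parameter diagonal family: for each $p>\frac{N+2}{N-2}$ it takes the radial solution $v_p$ of $-\Delta v=|x|^{p(N-2)-N-2}v^p$ in $B$ and the quantity $\nu_1(p)$, the first eigenvalue of the weighted linearized operator at $v_p$ with this $p$-dependent weight. By the real-analyticity machinery of \cite{Dancer_1} as implemented in \cite{delpino_small}, $p\mapsto\nu_1(p)$ is a non-constant real-analytic function on $(\frac{N+2}{N-2},\infty)$; combined with the analogue of Proposition \ref{no_non_zero} (degeneracy forces $\nu_1(p)\in\{-\lambda_k:k\ge1\}$) and local boundedness of $\nu_1$, this shows the set of degenerate $p$ is a sequence $p_k\to\infty$. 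Only then does the perturbation result (Theorem 1$'$, the fixed-point argument) apply for the remaining $p$, and the Kelvin transform gives the fast-decay solution. So to repair your argument you must replace the diagonal-over-Theorem-\ref{main} step by this analyticity-in-$p$ argument for the family with $\alpha=\alpha(p)$ built in (or prove some equivalent control of the exceptional sets as $\alpha$ varies), which is precisely the ingredient quoted from \cite{delpino_small}.
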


We now discuss another related work.
In \cite{delpino_small}  they examined
   \begin{equation}
 \left\{
 \begin{array}{lcl} \label{small}
\hfill   -\Delta u(x)  &=&  u(x)^p \quad  \mbox{ in $\Omega:=\mathcal{D} \backslash B_\delta(P)$},   \\
\hfill u&=& 0 \qquad \quad \quad  \mbox{on $\partial \Omega$,}
\end{array}\right.
  \end{equation}  where $ P \in \mathcal{D}$ is a smooth bounded domain in $ \IR^N$ and where $ \delta>0$ is small.   They examine the supercritical problem $ \frac{N+2}{N-2}<p$ and they find a sequence $ p_1 < p_2 < p_3<...$ with $ p_k \nearrow \infty$ such that if $p$ is given with $ p \neq p_k$ for all $k$, then for all $ \delta>0$ sufficiently small, (\ref{small}) has a positive classical solution.

A portion of our approach follows \cite{delpino_small} very closely, which is not surprising since, as noted by many authors,  the equation $ -\Delta u = |x|^\alpha f(u) $ in $ 0 \in \Omega$,   has some similarities to $ -\Delta u = f(u)$ on an annular domain with center $0$.   In fact they end up examining the radial solution of  (\ref{dom_pe}) on the unit ball in the case of a specific value of  $ \alpha = \alpha_p$.  They then go on to show this solution is non degenerate by proving an analyticity result.   Their prove can be extended to show the same analyticity result in our case and our method then diverges from theirs.

\begin{remark} As a general comment we point out that recently problems of the form $L(u)=  |x|^\alpha f(u)$ in $ \Omega $ have attracted a lot of interest.  In \cite{GG,EGG} it was shown that the term $ |x|^\alpha$ can drastically alter the so called \emph{critical dimension} associated with the problem $ -\Delta u = \lambda f(u)$ in $B$.  To state these results another way,  they showed that the term $ |x|^\alpha$ can have a major impact in  the regularity of stable and finite Morse index solutions.  They also showed Liouville type theorems for finite Morse index theorems of similar equations on $ \IR^N$.       One can also examine the possibility of obtaining Liouville theorems for $-\Delta u = |x|^\alpha u^p$ in $ \IR^N$,  but now without any additional spectral assumptions.     One expects to obtain the result that for any $ p < p_\alpha(N)$ there is no positive solution.  This was known to be true in the radial case but has only recently been proven in the case of $N=3$, see \cite{phan}.  They showed partial results in higher dimensions.  The major difficultly here is that one does not have the moving plane method (we are only considering the case of positive $\alpha$).   After this work their methods were used to examine the case of fourth order problems and systems.  See \cite{craig_11, Fazly_ghouss, Phan_syst}.

\end{remark}

The paper is organized in the following way.   In Section 2 we  prove Theorem 1',  which is Theorem 1 up to  a non degeneracy condition.  In Section 3 we obtain the non degeneracy condition.       Theorem 2 is obtained from Theorem 1 after an application of the Kelvin transform.

\begin{ack} In the original version of this paper we proved there exists a countable set $ \{p_k\}_k \subset [ \frac{N+2}{N-2}, p_\alpha(N)] $ such that for any $ p \in [ \frac{N+2}{N-2}, p_\alpha(N)]$, which avoided $ \{p_k\}_k$, there exists a positive classical solution of (\ref{dom_pe}), provided $ \Omega$ is a sufficiently small perturbation of the unit ball.     The anonymous referee suggested this result could be improved to the statement in Theorem \ref{main} and we greatly appreciate their suggestion.

\end{ack}

\section{The fixed point argument}

  For $ 1 <p<p_\alpha(N)$ we let $ v_p(r)$ denote the radial positive classical solution of (\ref{dom_pe}) on the unit ball promised by  Theorem A.     In this section we obtain a  positive classical solution of (\ref{dom_pe}), in the case where $ \Omega$ is a small perturbation of the unit ball, provided the radial solution $ v_p$  is non degenerate.

   We begin by using a change of variables from \cite{gelfand_sing},  their interest was in singular stable solutions of $-\Delta u = \lambda e^u$ in $ \Omega$ with $ u=0$ on $ \pOm$ where $ \Omega$ was a perturbation of the unit ball.
  Let $ \psi: \overline{B} \rightarrow \IR^N$ be a smooth map and for $ t >0$ define
  \[ \Omega_t:= \left\{ x + t \psi(x): x \in B \right\},\]  which, for small $t$, will be the small perturbation of the unit ball that we solve (\ref{dom_pe}) on.    There is some small $ 0 <t_0$ such that for all $ 0 <t<t_0$ one has that $ \Omega_t$ is diffeomorphic to the unit ball $B$.  Let $ y=x +t \psi(x)$ for $ x \in B$ and note there is some $ \tilde{\psi}$ smooth such that $ x = y + t \tilde{\psi}(y) $  for $ y \in \Omega_t$.  Given $u(y)$ defined on $ y \in \Omega_t$ or $v(x)$ defined on $x \in B$ we define the other via $ u(y) =v(x)$.   A computation shows, see \cite{gelfand_sing}, that to find  positive classical solution $u(y)$ of (\ref{dom_pe}) on $ \Omega_t$ it is equivalent  to find a positive classical solution $v(x)$ of

   \begin{equation} \label{ball}
 \left\{ \begin{array}{lcl}
\hfill   -\Delta v - L_t(v)   &=& |x + t \psi(x)|^\alpha v^p \qquad B,   \\
\hfill v&=& 0 \qquad\qquad \qquad \quad  \partial B,
\end{array}\right.
  \end{equation}
  where
  \[ L_t(v) = 2t \sum_{i,k} v_{x_i x_k} \partial_{y_i} \tilde{ \psi}_k + t \sum_{i,k} v_{x_k} \partial_{y_i y_i} \tilde{\psi}_k + t^2 \sum_{i,j,k} v_{x_j x_k} \partial_{y_i} \tilde{\psi}_j  \partial_{y_i} \tilde{\psi}_k. \]
   Note that $v_p$ solves (\ref{ball}) when $ t=0$.

     There are two options to obtain the existence of positive classical solutions to (\ref{ball}) for small $ 0<t$.  We can either  apply the Implicit Function Theorem (IFT) or we can apply a fixed point argument.   To apply the IFT we define
   $ F:\IR \times C_0^{2,\delta}(\overline{B}) \rightarrow C^{0,\delta}( \overline{B})$ by
\[ F(t,v):= -\Delta v - L_t(v) - |x+ t \psi(x)|^\alpha |v(x)|^p,\]
  where $ C_0^{2,\delta}(\overline{B}):= \{ v \in C^{2,\delta}(\overline{B}): v|_{\partial B}=0 \}$.   A computation shows that for $ \alpha \ge 1$ and $ 0 < \delta$ sufficiently small,  that $F$ is a $C^1$ map.
We now note that $F(0,v_p)=0$ and hence,  if one can apply the IFT,  then there is some small $ 0 <t_1$ and  a  continuous  map $[0,t_1) \ni t \mapsto v(t) \in C_0^{2,\delta}( \overline{B})$ such that $ F(t, v(t))=0$ on $[0,t_1)$.   Note that $ v(t)>0$ for small $t$ after considering $v_p$ is positive and $ v(t) \rightarrow v_p$ in $ C^{2,\delta}$.  Of course to apply the IFT we require that
  $ D_v F(0,v_p):C_0^{2,\delta}(\overline{B}) \rightarrow C^{0,\delta}( \overline{B})$ has a bounded inverse.
       To show the existence of the bounded inverse it is sufficient to show the radial solution is non degenerate, ie.  there  is no non zero $ \phi \in C_0^{2,\delta}(\overline{B})$ such that
  \begin{equation} \label{deg}
  -\Delta \phi = p r^\alpha v_p(r)^{p-1} \phi  \qquad \mbox{in $ B$}.
  \end{equation}   The drawback of this approach is that the regularity of $F$ requires  $ \alpha \ge 1$ instead of the more natural assumption that $ \alpha >0$.

   The second approach is to use a fixed point argument.  Now recall  that a positive classical solution of (\ref{ball}) is equivalent to a positive classical solution of (\ref{dom_pe}).  With this in mind we now prove Theorem \ref{main} up to a non degeneracy condition, which is given by the following theorem.  In the next section we show that one has this non degeneracy condition for all but a countable number of $p$ in the desired range.

      \begin{theorem}{1'} (Theorem \ref{main} up to a non degeneracy condition) Let $ \alpha >0$, $ \frac{N+2}{N-2} \le p < p_\alpha(N)$ and suppose there is no non zero solution of (\ref{deg}).  Then for $ 0<t$ sufficiently small there exists a positive classical solution of (\ref{ball}).
      \end{theorem}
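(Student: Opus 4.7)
The plan is to set up (\ref{ball}) as a perturbation of the radial problem solved by $v_p$, and produce the solution by a Banach fixed point argument on the perturbation. Writing $v = v_p + \phi$ with $\phi|_{\partial B}=0$ and using $-\Delta v_p = r^\alpha v_p^p$, the correction $\phi$ must satisfy
\[
\mathcal{L}\phi := -\Delta\phi - p\,r^\alpha v_p^{p-1}\phi \;=\; \mathcal{N}_t(\phi),
\]
where
\[
\mathcal{N}_t(\phi) = \bigl[|x+t\psi(x)|^\alpha - r^\alpha\bigr](v_p+\phi)^p + r^\alpha\bigl[(v_p+\phi)^p - v_p^p - p v_p^{p-1}\phi\bigr] + L_t(v_p+\phi).
\]
The first bracket is $O(t)$ uniformly in $\phi$, the second is superlinear in $\phi$, and the coefficients of $L_t(\cdot)$ all carry an explicit factor of $t$.

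Next I would invert $\mathcal{L}$ in Hölder spaces. Fix $\delta\in(0,\min(1,\alpha))$, which is possible because $\alpha>0$, and set $X=\{\phi\in C^{2,\delta}(\overline B):\phi|_{\partial B}=0\}$ and $Y=C^{0,\delta}(\overline B)$. By Theorem A and elliptic regularity $v_p\in C^2(\overline B)\cap C^\infty(B)$; together with $r^\alpha\in C^{0,\delta}(\overline B)$ (here we need $\delta<\alpha$), the multiplication $\phi\mapsto p\,r^\alpha v_p^{p-1}\phi$ is a bounded operator $X\to Y$, and is compact via the compact embedding $X\hookrightarrow C^0(\overline B)$. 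Since $-\Delta:X\to Y$ is an isomorphism by Schauder theory, $\mathcal{L}:X\to Y$ is Fredholm of index zero. The non-degeneracy hypothesis says $\ker\mathcal{L}=0$, so $\mathcal{L}$ admits a bounded inverse $\mathcal{L}^{-1}:Y\to X$. Define the map $T_t(\phi):=\mathcal{L}^{-1}\mathcal{N}_t(\phi)$ and apply Banach's fixed point theorem on a small closed ball $\overline B_R(0)\subset X$. A second-order Taylor expansion gives, for $\|\phi\|_X\le R$ with $R$ small,
\[
\bigl\|\mathcal{N}_t(\phi)\bigr\|_Y \le C_1 R^\mu + C_2 t,\qquad \bigl\|\mathcal{N}_t(\phi_1)-\mathcal{N}_t(\phi_2)\bigr\|_Y \le C_3\bigl(R^{\mu-1}+t\bigr)\|\phi_1-\phi_2\|_X,
\]
with some exponent $\mu>1$. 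Here the boundary decay $v_p\gtrsim\mathrm{dist}(\cdot,\partial B)$ (Hopf) is used to control the apparent singularity of $v_p^{p-2}$ in the remainder when $1<p<2$, against the factor $\phi^2$ which vanishes on $\partial B$ at the same rate. Choosing $R=R(t)\to 0$ appropriately makes $T_t$ a contractive self-map of $\overline B_R$ for small $t$, producing a unique fixed point $\phi_t\in X$ with $\|\phi_t\|_X\to 0$ as $t\to 0^+$. Then $v:=v_p+\phi_t$ is a $C^{2,\delta}$ solution of (\ref{ball}) which is positive on $B$ because $v_p$ is and $\phi_t\to 0$ in $C^{2,\delta}$; undoing the change of variables yields the desired positive classical solution of (\ref{dom_pe}) on $\Omega_t$.

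The main obstacle is the one that ruled out the direct IFT approach discussed before the theorem: when $0<\alpha<1$ the weight $|x|^\alpha$ fails to be $C^1$, and composing $t\mapsto|x+t\psi(x)|^\alpha$ is not differentiable uniformly in $x$, so $F$ need not be $C^1$. Choosing $\delta<\alpha$ sidesteps this because only Hölder continuity (i.e.\ Lipschitz estimates for the nonlinear map $\mathcal N_t$) is required for Banach's fixed point theorem, not continuous Fréchet differentiability. A secondary technical point is the quadratic/superlinear estimate of the second bracket in $\mathcal{N}_t$, which for $1<p<2$ requires the boundary-decay argument above to bound the Taylor remainder in Hölder norm.
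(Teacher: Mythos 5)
Your proposal is correct and follows essentially the same route as the paper: the ansatz $v=v_p+\phi$, inversion of the linearized operator $-\Delta - p|x|^\alpha v_p^{p-1}$ on $C_0^{2,\delta}(\overline B)\to C^{0,\delta}(\overline B)$ with $\delta$ small (so that $|x|^\alpha$ is admissible), and a contraction mapping for $\phi$ on a small ball with $t$ small. The only cosmetic differences are that you make the invertibility explicit via a Fredholm index-zero argument and control the nonlinear remainder by a Taylor expansion combined with the Hopf boundary decay of $v_p$, whereas the paper asserts the bounded inverse and uses the binomial series under the constraint $\| v_p^{-1}\phi\|_\delta<\tfrac14$, which encodes the same boundary-vanishing information.
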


   \begin{proof}
     We take the standard approach of linearizing around the approximating solution, $v_p$.   Instead of solving (\ref{ball}) directly we solve  the slightly modified version  given by
   \begin{equation} \label{absolute}
   -\Delta v- L_t(v)= |x+ t \psi(x)|^\alpha |v|^p \qquad B,
   \end{equation} with $v=0$ on $\partial B$.  One can then argue that the solution is positive and hence solves (\ref{ball}).     To solve (\ref{absolute}) we look for solutions of the form $ v = v_p+ \phi$.    One then sees they need to solve
  \begin{equation} \label{nonlinear}
  L(\phi)= L_t(v_p)+ L_t(\phi) + H_t(x, \phi) \qquad B,
   \end{equation} with $ \phi=0 $ on $ \partial B$  where $ L(\phi) = -\Delta \phi - p |x|^\alpha v_p^{p-1} \phi$  and
   \[ H_t(x,\phi)= |x+t \psi(x)|^\alpha |v_p + \phi|^p - |x|^\alpha v_p^p - p |x|^\alpha v_p^{p-1} \phi.\]   Since we are assuming there is no   non zero solution of (\ref{deg}) on can easily show that  $L$ is a one to one and onto continuous linear operator from $ C_0^{2,\delta}(B)$ to $ C^{0,\delta}(B)$, provided $ \delta >0$ is sufficiently small, and hence has a continuous inverse.    So to solve (\ref{absolute}) it is sufficient to find a fixed point of
   \[ T(\phi):= L^{-1}(L_t(v_p))+ L^{-1}( L_t(\phi)) +L^{-1}(H_t(x,\phi)).\] We will show that for sufficiently small $t$ and $R$ that $T$ is a contraction on the closed ball of radius $R$ centered at the origin in $C_0^{2,\delta}(\overline{B})$,  which we denote by $B_R$, and hence has a fixed point.

       Let $0<R_p<1$ be sufficiently small such that for all $ \phi \in B_{R_p}$ one has: $ v_p + \phi \ge \frac{v_p}{2}$ and $ \| v_p^{-1} \phi \|_\delta < \frac{1}{4}$.  Here $ \| \cdot \|_\delta$ is the norm on $ C^{0,\delta}(\overline{B})$ given by $ \| \phi\|_\delta = \| \phi\|_{L^\infty} + [ \phi ]_\delta$,  where the second term is the usual semi norm term.   We  can also assume that $ 0<t_0<1$ where this was previously defined.   Let $ R<R_p$, $ t<t_0$ and $ \phi \in B_R$.  $C$ will denote universal constants which are independent of $R,t,\phi$ but may depend on various parameters including norms of $v_p$. $C(\tau)$ will denote a constant that depends on $\tau$ and which satisfies $ \lim_{\tau \searrow 0} C(\tau)=0$.
     First note that by the continuity of $L^{-1}$ there is some $ C>0$
   \begin{eqnarray*}
    C \| T(\phi)\|_{2,\delta} & \le & \| L_t(v_p)\|_\delta + \|L_t(\phi)\|_\delta + \|H_t(x,\phi)\|_\delta \\
    & \le &  C t \|v_p\|_{2,\delta} + C t \| \phi\|_{2,\delta}  + \| H_t(x,\phi)\|_{\delta},
    \end{eqnarray*}

     where $\| \cdot \|_{2,\delta}$ is the norm on $C^{2,\delta}$.   Now note we can rewrite $H_t$ as
    \[ H_t(x,\phi)=\left\{ |x+t \psi(x)|^\alpha-|x|^\alpha \right\} (v_p+\phi)^p + |x|^\alpha \left\{ (v_p+\phi)^p-v_p^p - p v_p^{p-1} \phi \right\}.  \]  One then obtains
    \[ \| H_t(x,\phi)\|_{L^\infty} \le C(t) + \| (v_p+\phi)^p-v_p^p - pv_p^{p-1} \phi \|_{L^\infty},\]  and the last term can be bounded above by a term of the form $  C(R) \| \phi\|_{L^\infty}$ by using, for instance, the Binomial Theorem.

     We now compute the semi norm of $H_t(x,\phi)$ and again we use the Binomial Theorem.  Recall that $ (a+b)^p= \sum_{k=0}^\infty \gamma_k a^{p-k} b^k$,  where $ \gamma_k$ are the binomial coefficients, and where this series converges absolutely provided $ |b| <a$.   The following computations will utilize the following two inequalities: $ [fg]_\delta \le \| f \|_{L^\infty} [g]_\delta + [f]_\delta \| g \|_{L^\infty}$ and $ [f^k]_\delta \le k \|f\|_\delta^k$ for $k$ a positive integer.  Using the above estimates we see that
    \begin{eqnarray*}
     \left[ H_t(x,\phi) \right]_\delta & \le &  C(t) + C \| (v_p+\phi)^p - v_p^p-pv_p^{p-1} \phi\|_{L^\infty} \\
      && + \left[ (v_p+\phi)^p - v_p^p-pv_p^{p-1} \phi \right]_\delta
      \end{eqnarray*} and now recall the second term is bounded above by $ C(R) \| \phi\|_{L^\infty}$.   We now compute the last term.    Firstly we write
      \[ (v_p+\phi)^p - v_p^p-pv_p^{p-1} \phi = \sum_{k=2}^\infty \gamma_k v_p^{p-k} \phi^k,\] and hence the last term is bounded above by
     $ \sum_{k=2}^\infty |\gamma_k| \left[ v_p^{p-k} \phi^k \right]_\delta.$  We write $ \left[ v_p^{p-k} \phi^k \right]_\delta $ as $ \left[ v_p^{p-1} \phi \left( \frac{\phi}{v_p} \right)^{k-1} \right]_\delta$ and then we expand this using the two rules mentioned above.  Doing this gives
     \begin{eqnarray*}
       \left[ v_p^{p-k} \phi^k \right]_\delta & \le & \| \phi\|_{L^\infty} \| v_p\|_{L^\infty}^{p-1} (k-1) \| v_p^{-1} \phi\|_\delta^{k-1} + [ \phi ]_\delta  \|v_p\|_{L^\infty}^{p-1} \| v_p^{-1} \phi\|^{k-1} \\
       && + \| \phi\|_{L^\infty} [ v_p^{p-1}]_\delta \| v_p^{-1} \phi\|_{L^\infty}^{k-1}.
       \end{eqnarray*}  Using this we see that we have
       \[ \left[ (v_p+\phi)^p - v_p^p-pv_p^{p-1} \phi \right]_\delta \le C(R) \| \phi\|_\delta,\]  and combining this with the $L^\infty$ estimate gives $ \| H_t(x,\phi)\|_\delta \le C(t) + C(R) \| \phi\|_\delta$.  Using this we see that for all $ t<t_0, R<R_p$ and $ \phi \in B_R$ we have
       \begin{equation} \label{into}
        \|T(\phi)\|_{2,\delta} \le C t  + C(t) + \left\{C t + C(R) \right\} \| \phi\|_{2,\delta}.
       \end{equation}   We will pick the parameters $ R$ and $t$ after we find the sufficient condition for $T$ to be a contraction.

       Let $ t<t_0, R<R_p$ and $ \phi_0,\phi \in B_R$.   Using the continuity of $L^{-1}$ and the form of $L_t$ gives the estimate
       \[ \| T(\phi_0)- T(\phi)\|_{2,\delta} \le C t \| \phi_0-\phi\|_{2,\delta} + C \|H_t(x,\phi_0)-H_t(x,\phi)\|_\delta.\]

 We now rewrite the term $ H_t(x, {\phi_0})-H_t(x,\phi)$ as

    \begin{eqnarray*}
     I_1+I_2 &=&|x+t \psi(x)|^\alpha \left\{ (v_p+ {\phi_0})^p -(v_p+\phi)^p - p  v_p^{p-1}({\phi_0}-\phi) \right\} \\
      && + \left\{ |x+t \psi(x)|^\alpha - |x|^\alpha \right\} p  v_p^{p-1} ({\phi_0}-\phi).
     \end{eqnarray*}  One easily sees that $\| I_2 \|_\delta \le C(t) \| \phi_0- \phi\|_\delta$.
       Using the Binomial Series approach and the fact that there is some  $ C>0$ such that $ \| v^{-1} (\phi_0-\phi)\|_{L^\infty} \le C \| \phi_0- \phi\|_{0,1}$ one obtains
      $\|I_1 \|_{L^\infty} \le C(R) \| \phi_0 - \phi\|_{0,1} \le C(R) \| \phi_0 - \phi\|_{2,\delta}$.
       A calculation shows that
     \[ \left[ I_1 \right]_\delta \le C(R) \| \phi_0 - \phi\|_{2,\delta} + C \left[ (v_p+\phi_0)^p - (v_p+\phi)^p - pv_p^{p-1} (\phi_0-\phi) \right]_\delta,\]  and we now estimate the final semi norm  term.  Towards this define $J=
      (v_p+\phi_0)^p - (v_p+\phi)^p - pv_p^{p-1} (\phi_0-\phi)$ and we can rewrite this in terms of the Binomial Series as
      \[ J = (\phi_0-\phi) \sum_{k=2}^\infty \gamma_k \sum_{i=0}^{k-1} v_p^{p-1} \left( \frac{ \phi_0}{v_p} \right)^{k-1-i} \left( \frac{ \phi}{v_p} \right)^i, \]  and from this representation one can show $ [J]_\delta \le C(R) \| \phi_0 - \phi\|_{2,\delta}$.   Combining the results we see that $ \| T(\phi_0)-T(\phi)\|_{2,\delta} \le ( C(t)+C(R)) \| \phi_0-\phi\|_{2,\delta}$.  Using this and (\ref{into}) one sees that $T$ will be a contraction mapping on $B_R$ provided $ R<R_p$ is fixed sufficiently small and then $ t<t_0$ is chosen sufficiently small.
     \end{proof}

     We now give a non existence result for positive solutions of (\ref{dom_pe}).
  \begin{prop} \label{non-origin}  Let $ 0 \in \Omega$ denote a smooth bounded domain in $ \IR^N$ which is star shaped with respect to the origin,  $ 0 < \alpha$ and  suppose  $ |x_m| \rightarrow \infty$.  Then for any  $ \frac{N+2}{N-2} <p$ there is no positive classical solution of (\ref{dom_pe}) on $ \Omega_m:=x_m + \Omega$ provided $m$ is sufficiently big.

  \end{prop}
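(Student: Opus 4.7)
The plan is to adapt the classical Pohozaev argument to exploit both the star-shapedness of $\Omega_m$ with respect to $x_m$ and the fact that for large $m$ the weight $|x|^\alpha$ is nearly constant on $\Omega_m$. Since $\Omega$ is star-shaped with respect to $0$, the translated domain $\Omega_m = x_m + \Omega$ is star-shaped with respect to $x_m$, so $(x-x_m) \cdot \nu \ge 0$ on $\partial \Omega_m$. The naive Pohozaev identity (using $(x-x_m)\cdot \nabla u$ as the multiplier) would only give non-existence for $p > p_\alpha(N)$, but we should pick up a gain because the relevant divergence $\mathrm{div}(|x|^\alpha(x-x_m))$ is asymptotically $N|x|^\alpha$ rather than $(N+\alpha)|x|^\alpha$ when $|x_m|$ is large.

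Concretely, I would multiply the equation by $(x-x_m)\cdot \nabla u$ and integrate over $\Omega_m$. The left-hand side, after the usual integration by parts with $u=0$ on $\partial \Omega_m$, produces
\[
\frac{2-N}{2}\int_{\Omega_m}|\nabla u|^2 \, dx - \frac{1}{2}\int_{\partial \Omega_m}(\partial_\nu u)^2 (x-x_m)\cdot \nu \, d\sigma.
\]
For the right-hand side, writing $(x-x_m)\cdot \nabla u^{p+1}/(p+1)$ and integrating by parts gives a volume term $-\frac{1}{p+1}\int u^{p+1}\, \mathrm{div}(|x|^\alpha (x-x_m))$. A direct computation yields
\[
\mathrm{div}(|x|^\alpha(x-x_m)) \;=\; (N+\alpha)|x|^\alpha - \alpha|x|^{\alpha-2} x\cdot x_m \;=\; \bigl(N - \alpha\,\epsilon_m(x)\bigr)|x|^\alpha,
\]
where $\epsilon_m(x) := \frac{x\cdot x_m}{|x|^2} - 1 = O(1/|x_m|)$ uniformly for $x \in \Omega_m$, since $\Omega$ is bounded. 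Combined with the standard identity $\int |\nabla u|^2 = \int |x|^\alpha u^{p+1}$, the Pohozaev identity becomes
\[
\left(\frac{N}{p+1} - \frac{N-2}{2}\right)\!\!\int_{\Omega_m}\!|x|^\alpha u^{p+1}\, dx \;-\; \frac{\alpha}{p+1}\!\int_{\Omega_m}\!\epsilon_m(x)|x|^\alpha u^{p+1}\, dx \;=\; \frac{1}{2}\!\int_{\partial \Omega_m}\!(\partial_\nu u)^2 (x-x_m)\cdot \nu\, d\sigma.
\]

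For $p > (N+2)/(N-2)$ the leading coefficient is a strictly negative constant $-c$. Since $\|\epsilon_m\|_{L^\infty(\Omega_m)} \to 0$ as $m \to \infty$, for $m$ large enough the perturbation term is absorbed and the whole left-hand side is strictly negative unless $u\equiv 0$, while star-shapedness forces the right-hand side to be nonnegative. This contradicts the existence of a positive classical solution. The only step requiring care is the uniform estimate $\epsilon_m(x) \to 0$, which is an elementary consequence of $|x_m|\to \infty$ and the boundedness of $\Omega$; the rest is a bookkeeping exercise on the Pohozaev computation.
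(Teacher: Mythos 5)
Your proof is correct and is essentially the same as the paper's: the paper merely translates (and rescales) the solution back to $\Omega$, so that your multiplier $(x-x_m)\cdot\nabla u$ on $\Omega_m$ becomes $x\cdot\nabla v_m$ on $\Omega$ and your error term $-\alpha\,\epsilon_m(x)$ is exactly the paper's $\varepsilon_m$, which tends to $0$ uniformly as $|x_m|\to\infty$. The conclusion, combining $\frac{N-2}{2}-\frac{N}{p+1}>0$ with $(x-x_m)\cdot\nu\ge 0$ from star-shapedness, is identical.
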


  \begin{proof}  Suppose $ u$ is a positive classical solution of (\ref{dom_pe}) on $ \Omega_m$.  Define  $ t_m, z_m, \gamma_m$ by $ t_m^{p-1} |x_m|^\alpha=1$, $ z_m = \frac{x_m}{|x_m|}$ and $ \gamma_m = |x_m|^{-1}$.   Note that $ t_m, \gamma_m \rightarrow 0$ and $ |z_m|=1$.   Define the rescaled functions $ v_m(x)= t_m^{-1} u(x_m+x)$ and note that $v_m$ is a positive solution of
   \begin{equation*}  \label{scaled}
 \left\{
 \begin{array}{lcl}
\hfill   -\Delta v  &=& |z_m+\gamma_m x|^\alpha v^p \quad  \mbox{ in $\Omega$},   \\
\hfill v&=& 0 \qquad \quad \qquad  \qquad \mbox{on $\pOm$.}
\end{array}\right.
  \end{equation*}   Applying a Pohozaev argument gives

  \begin{eqnarray*}
  \int_\Omega v_m^{p+1} |z_m+\gamma_m x|^\alpha \left( \frac{N-2}{2} - \frac{N}{p+1} - \frac{\E_m(x)}{p+1} \right)  && \\
  + \frac{1}{2} \int_{\pOm} | \nabla v_m|^2 x \cdot \nu(x) && =0
  \end{eqnarray*}  where $ \E_m(x) =\frac{ x  \cdot \nabla_x (|z_m +\gamma_m x|^\alpha) }{ |z_m+\gamma_m x|^\alpha}$ and $ \nu(x)$ is the outward pointing normal on $ \pOm$.  Now note that $ \E_m(x) \rightarrow 0$ uniformly on $ \Omega$ and hence the result follows after considering the fact $ \frac{N-2}{2} - \frac{N}{p+1}>0$ and $ x \cdot \nu(x) \ge 0$ on $ \pOm$.
  \end{proof}

\section{Non degeneracy of the radial solution}

 We begin by defining a few necessary quantities.   Let $ \Delta_\theta$ denote the Laplace-Beltrami operator on $ S^{N-1}$ and we let $ (\phi_k,\lambda_k)$ denote the associated eigenpairs, ie.  $ -\Delta_\theta \phi_k(\theta) = \lambda_k \phi_k(\theta)$ in $ S^{N-1}$.  Note that $ \lambda_0=0$, $ \phi_0=1$ and $ \lambda_1=\lambda_2=,...,=\lambda_N=N-1, < \lambda_{N+1},..$.   Define the operator $ \tilde{L}$ on the  radial functions by
\[ \tilde{L}(\psi):= r^2 \left( -\Delta \psi - p r^\alpha v_p(r)^{p-1} \psi \right),\]  and we work in the weighted $L^2$ space given by $ H:=L^2(B, |x|^{-2} dx)$.  We define the domain $ \tilde{L}$ by
\[ D(\tilde{L}):=\left\{ \psi \in H \cap H^1_{0,rad,loc}(\overline{B} \backslash \{0\}): \tilde{L}(\psi) \in H \right\}.\]  Note that $ \tilde{L}$ is self-adjoint on $H$.   For given $ \alpha >0$ and $ p>1$ we define
\[ \nu(p):= \inf_{ \psi \in D(\tilde{L})}  \frac{  \int_B |  \nabla \psi|^2 - p r^\alpha v_p(r)^{p-1} \psi^2 dx}{  \int_B \frac{\psi^2}{|x|^2} dx }.\]  Note that $ \nu(p)$ is just the first eigenvalue of $ \tilde{L}$ which is clearly negative after one considers taking $ \psi=v_p$.

  \begin{prop} \label{no_non_zero}  Suppose that (\ref{deg}) has a non zero solution $ \phi$.  Then $ \nu(p) \in \{ - \lambda_k:  k \ge 1\}$.
  \end{prop}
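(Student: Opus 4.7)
The plan is a spherical harmonic decomposition on $S^{N-1}$ that reduces (\ref{deg}) to a family of radial eigenvalue problems for $\tilde L$. Let $\{\tilde\phi_k\}_{k \ge 0}$ be the orthonormal basis of $L^2(S^{N-1})$ with $-\Delta_\theta \tilde\phi_k = \lambda_k \tilde\phi_k$ introduced just above the statement. Writing $x = r\theta$ with $r = |x|$ and $\theta \in S^{N-1}$, I would expand
\[
\phi(r\theta) = \sum_{k \ge 0} \phi_k(r)\, \tilde\phi_k(\theta), \qquad \phi_k(r) = \int_{S^{N-1}} \phi(r\theta)\,\tilde\phi_k(\theta)\,d\sigma(\theta).
\]
The regularity $\phi \in C_0^{2,\delta}(\overline B)$ ensures each $\phi_k$ is smooth on $(0,1]$, with $\phi_k(1) = 0$ and the appropriate regularity at the origin inherited from $\phi$.

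Substituting into $-\Delta \phi = p\,r^\alpha v_p^{p-1}\phi$, using the polar split
\[
-\Delta = -\partial_r^2 - \frac{N-1}{r}\partial_r + \frac{1}{r^2}\bigl(-\Delta_\theta\bigr),
\]
and projecting onto the $k$-th spherical harmonic yields, for each $k$,
\[
-\phi_k'' - \frac{N-1}{r}\phi_k' + \frac{\lambda_k}{r^2}\phi_k = p\, r^\alpha v_p(r)^{p-1}\,\phi_k, \qquad \phi_k(1) = 0.
\]
Multiplying by $r^2$ rewrites this precisely as $\tilde L(\phi_k) = -\lambda_k \phi_k$ in the radial sense. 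Hence any nonzero mode $\phi_k$ is a radial eigenfunction of $\tilde L$ with eigenvalue $-\lambda_k$, and in particular $-\lambda_k$ lies in the spectrum of $\tilde L$ on $D(\tilde L)$.

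Since $\phi \not\equiv 0$, at least one $\phi_{k_*}$ is nonzero. The radial index $k = 0$ is ruled out immediately, since $-\lambda_0 = 0$ while $\nu(p) < 0$ (as already noted, by testing the Rayleigh quotient with $\psi = v_p$). For the surviving indices $k_* \ge 1$ one has $-\lambda_{k_*} < 0$, and the remaining substantive step is to show that the only strictly negative eigenvalue of $\tilde L$ on the radial subspace is the ground state $\nu(p)$ itself, so that $-\lambda_{k_*}$ is forced to equal $\nu(p)$. This is the main obstacle; I would handle it via Sturm--Liouville theory for the ODE $\tilde L \psi = \mu \psi$ on $(0,1)$, ruling out a second strictly negative eigenvalue by observing that its sign-changing eigenfunction would contradict the variational ground-state characterization of $v_p$ from Theorem A (together with the fact that the first radial eigenfunction of $\tilde L$ is of one sign). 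Once that is in hand, the spherical decomposition delivers $\nu(p) \in \{-\lambda_k : k \ge 1\}$ exactly as claimed.
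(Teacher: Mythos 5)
Your spherical-harmonic decomposition and the identity $\tilde{L}(\phi_k)=-\lambda_k\phi_k$ for each mode match the paper, but there is a genuine gap at the $k=0$ mode. Saying it is ``ruled out immediately, since $-\lambda_0=0$ while $\nu(p)<0$'' proves nothing: if the radial component $\phi_0$ were nonzero it would exhibit $0$ as an eigenvalue of $\tilde{L}$ on radial functions, and nothing prevents $0$ from being a \emph{higher} eigenvalue of $\tilde{L}$ while the first eigenvalue $\nu(p)$ is negative. In that scenario $\phi$ could be purely radial, no mode with $k\ge 1$ would be available, and the conclusion $\nu(p)\in\{-\lambda_k:k\ge 1\}$ would not follow from your argument at all. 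What is actually needed, and what the paper spends the bulk of its proof on, is the radial non-degeneracy of $v_p$, i.e. $\phi_0\equiv 0$ unconditionally. The paper obtains this through the change of variables $v_p(r)=(1+\frac{\alpha}{2})^{2/(p-1)}\tilde{v_p}(r^{1+\alpha/2})$, $\phi_0(r)=\tilde{a_0}(r^{1+\alpha/2})$ (Theorem B and Corollary \ref{linearized_change_cor}), which transforms the radial linearized equation into the linearization of $-\Delta_{N(\alpha)}w=w^p$ in the fractional dimension $N(\alpha)$, and then invokes Pacella's non-degeneracy result \cite{pacella}, after checking its proof survives in fractional dimension for radial perturbations. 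You need this step, or some substitute proof of radial non-degeneracy; without it the proposition is not established.

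For the modes $k\ge 1$ your plan is in the right spirit but still only a sketch, and it differs from the paper's mechanism: the paper does not argue via Sturm--Liouville oscillation or a sign-change of a second eigenfunction, it bounds the number of negative eigenvalues of $\tilde{L}$ by a Morse index count. If $\tilde{L}$ had two negative eigenvalues, orthogonality of the eigenfunctions in $L^2(B,|x|^{-2}dx)$ would produce a two-dimensional subspace on which $\int_B|\nabla\psi|^2-p\int_B r^\alpha v_p^{p-1}\psi^2$ is negative, so the radial Morse index of $v_p$ would be at least two, contradicting that $v_p$ is a mountain-pass critical point of a $C^2$ functional on $H^1_{0,rad}(B)$ (index at most one). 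Your appeal to ``the variational ground-state characterization of $v_p$ from Theorem A'' would have to be made precise along exactly these lines, since Theorem A by itself only provides existence via a radial compact embedding and minimization, not an index bound.
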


  \begin{proof}

Suppose $ \phi$ satisfies (\ref{deg}) and we write
 $ \phi(x) = \sum_{k=0}^\infty a_k(r) \phi_k(\theta)$.
  We now use the formula $ \Delta \phi = \phi_{rr} + \frac{N-1}{r} \phi_r + \frac{1}{r^2} \Delta_\theta \phi$ to see that
  \[ 0= \sum_{k=0}^\infty \left( a_k''(r) + \frac{N-1}{r} a_k'(r) - \frac{\lambda_k }{r^2} a_k(r) + p r^\alpha v_p(r)^{p-1} a_k(r) \right) \phi_k(\theta),\]  and hence we have
  \begin{equation} \label{eigen}
 0=a_k''(r) + \frac{N-1}{r} a_k'(r) +  p r^\alpha v_p(r)^{p-1} a_k(r) - \frac{\lambda_k }{r^2} a_k(r)  \qquad \forall k \ge 0,
\end{equation}  along with the boundary condition $ a_k(1)=0$. \\
 We begin with $k=0$.    Since $ \lambda_0=0$ we have $ a_0$ is a radial solution of
 \[-\Delta a_0 = p r^\alpha v_p^{p-1} a_0 \qquad \mbox{ in $B$,} \]  with $ a_0(1)=0$.   We now define $\tilde{v_p}$ and $ \tilde{a_0}$ by
\[ v_p(r)= (1+ \frac{\alpha}{2})^\frac{2}{p-1} \tilde{v_p}(r^{\frac{\alpha}{2}+1}), \quad a_0(r)= \tilde{a_0}(r^{\frac{\alpha}{2}+1}),\] as in Theorem B and Corollary \ref{linearized_change_cor} in the appendix.   From Theorem B and Corollary \ref{linearized_change_cor}  we have
\[ -\Delta_{N(\alpha)} \tilde{v_p} = \tilde{v_p}^p \qquad \tilde{B}, \] and
\[ -\Delta_{N(\alpha)} \tilde{a_0} = p \tilde{v_p}^{p-1} \tilde{a_0} \qquad \tilde{B},\]  with zero Dirichlet boundary conditions.  Here $\Delta_{N(\alpha)} $ is the Laplacian in the fractional dimension $ N(\alpha)$ see the appendix.  We now wish to show that $ \tilde{a_0}=0$.   We now recall the following result, from \cite{pacella}, where  it is shown that any positive classical (and hence radial) solution of
   \begin{equation*}
 \left\{
 \begin{array}{lcl} \label{ext}
\hfill   -\Delta w &=& w^p  \quad  \mbox{ in $B$},   \\
\hfill w&=& 0 \qquad   \mbox{on $\partial B$,}
\end{array}\right.
  \end{equation*} is non degenerate, ie. there is no non zero $ \psi $ which satisfies $ -\Delta \psi = p w^{p-1} \psi $ in $B$, with $ \psi=0 $ on $ \partial B$.   A careful examination of the proof shows that it is still valid in fractional dimensions provided $ \psi$ is radial and hence $ \tilde{a_0}=0$ and so $ a_0=0$.   Of course this does not say that $ v_p$  is a non degenerate solution of (\ref{dom_pe}),  but it does say that $ v_p$ is a non degenerate solution of (\ref{dom_pe}) in $H^1_{0,rad}(B)$.

  We now suppose that some $ a_k(r)$ is non zero solution of (\ref{eigen}) for some $ k \ge 1$, which we rearrange to read
  \[ r^2 \left( -\Delta a_k - p r^\alpha v_p^{p-1} a_k \right)= - \lambda_k a_k,\]  and so using the above notation we see that $ \tilde{L}(a_k) = - \lambda_k a_k$, so $ -\lambda_k <0$ is an eigenvalue of $ \tilde{L}$.   We now argue that $ \tilde{L}$ has only one negative eigenvalue and hence $ -\lambda_k$ must be the first eigenvalue of $ \tilde{L}$.

    To see this lets  suppose that $ \tilde{L}$ has at least 2 negative eigenvalues say $ \mu_1 < \mu_2 <0$ since the first is simple and we assume that $b_k$ are the associated eigenfunctions  of $ \tilde{L}$.   Since $ b_1$ is orthogonal to $ b_2$ in $ L^2_{rad}(B, |x|^{-2} dx)$ we have that $ \int_B \nabla b_1 \cdot \nabla b_2 - p \int_B r^\alpha v_p(r)^{p-1} b_1 b_2=0$.  From this we see that \[ I(\psi):= \int_B | \nabla \psi|^2 - p \int_B r^\alpha v_p(r)^{p-1} \psi^2,\] is negative on $ X:= \{ s b_1 + t b_2: s,t \in \IR\}$ except at the origin and hence the radial Morse index of  $ v_p$ is at least two.   Since $ v_p$ is a mountain pass solution of a $C^2$ functional over $ H_{0,rad}^1(B)$ we know that its  radial Morse index is at most one and hence we have a contradiction.  From this we see that  $ \tilde{L}$ has at most one negative eigenvalue.      Recalling that $ \nu(p)<0$ we see that $ \tilde{L}$ has exactly one negative eigenvalue,  and hence we must have  $ \nu(p) \in \{ -\lambda_k: k \ge 1 \}$.

  \end{proof}

  Our goal is to now show $ p \mapsto \nu(p)$ is a non constant analytic function.
  \begin{prop} \label{anal} Let $ 0 < \alpha$ and  define $ \nu(p)$ as above on $ (1, p_\alpha(N))$.  Then $ \nu(p)$ is non constant on this interval.

  \end{prop}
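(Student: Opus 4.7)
The plan is to rule out constancy of $p \mapsto \nu(p)$ by comparing its asymptotic behavior at the endpoints of $(1,p_\alpha(N))$. The natural tool is the change of variables from the appendix: setting $s=r^{1+\alpha/2}$ and $v_p(r)=(1+\alpha/2)^{2/(p-1)}\tilde v_p(s)$ converts the Henon equation on $B$ into the Lane--Emden equation $-\Delta_{N(\alpha)}\tilde v_p=\tilde v_p^{\,p}$ on the unit ball $\tilde B\subset\mathbb R^{N(\alpha)}$, where $N(\alpha)=2(N+\alpha)/(\alpha+2)$. The crucial observation is that $p_\alpha(N)=(N(\alpha)+2)/(N(\alpha)-2)$, so $p\nearrow p_\alpha(N)$ is precisely approaching the Sobolev-critical exponent in the transformed dimension. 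A direct calculation shows that $\nu(p)$ agrees, up to a positive multiplicative Jacobian factor, with
\[
\tilde\nu(p)=\inf_{\tilde\psi}\frac{\int_{\tilde B}|\nabla\tilde\psi|^2\,ds-p\int_{\tilde B}\tilde v_p^{\,p-1}\tilde\psi^2\,ds}{\int_{\tilde B}\tilde\psi^2/|s|^2\,ds},
\]
so signs and qualitative asymptotics transfer cleanly.

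Next I would analyze the critical limit. As $p\nearrow p_\alpha(N)$, the radial ground states $\tilde v_p$ concentrate at the origin: $M_p:=\tilde v_p(0)\to\infty$, and after rescaling $\tilde v_p(s)=M_p U_p(M_p^{(p-1)/2}s)$ the profiles $U_p$ converge locally to an Aubin--Talenti bubble $U$ on $\mathbb R^{N(\alpha)}$. Plugging a test function of the form $\tilde\psi(s)=\tilde\varphi(M_p^{(p-1)/2}s)$, for $\tilde\varphi\in C_c^\infty(\mathbb R^{N(\alpha)})$, into $\tilde\nu(p)$, the scaling factor $M_p^{(p-1)(1-N(\alpha)/2)}$ cancels identically between numerator and denominator — this is exactly where the Hardy weight $|s|^{-2}$ in the denominator pays off — and dominated convergence yields
\[
\lim_{p\nearrow p_\alpha(N)}\nu(p)=\nu^*,
\]
where $\nu^*$ is the corresponding first eigenvalue on $\mathbb R^{N(\alpha)}$ associated with the limiting Aubin--Talenti profile.

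Finally I would compare $\nu^*$ with $\nu(p)$ at an interior point. Using $\tilde v_p$ itself as a test function in the Rayleigh quotient and exploiting $\int|\nabla \tilde v_p|^2\,ds=\int\tilde v_p^{\,p+1}\,ds$ gives $\tilde\nu(p)\le (1-p)\int\tilde v_p^{\,p+1}/\int\tilde v_p^{\,2}/|s|^2$, which is explicit in $p$ and converges to a value distinct from $\nu^*$ as $p\searrow 1$ (where the transformed ball problem becomes a linear eigenvalue problem and $\tilde v_p$ is controlled). Since the two limiting values differ, $\nu$ cannot be constant on $(1,p_\alpha(N))$.

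The main obstacle, and the bulk of the technical work, is Step~2: making the concentration--rescaling argument for $\tilde v_p$ sufficiently precise to identify $\lim\nu(p)$, keeping careful track of how the Hardy weight $|s|^{-2}$ interacts with the concentration rate and the expanding domain. Once this limit is pinned down (or merely shown to be distinct from the $p\searrow 1$ asymptotics), non-constancy is immediate.
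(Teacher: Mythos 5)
There is a genuine gap, at both ends of your comparison. Near $p=1$, your Step~3 produces only an \emph{upper} bound, $\nu(p)\le (1-p)\int \tilde v_p^{\,p+1}\big/\int \tilde v_p^{\,2}/|s|^2$, and this bound tends to $0$ (once one knows $\|v_p\|_{L^\infty}^{p-1}$ stays bounded as $p\searrow 1$, which you have not proved either). Since $\nu(p)<0$ is already known for every $p$, an upper bound tending to $0$ carries no new information; what is needed at this endpoint is a \emph{lower} bound of the form $\nu(p)\ge (1-p)\,C$, and that is exactly where the real work lies: it uses that $v_p$ is a positive solution, so $-\Delta-p r^\alpha v_p^{p-1}\ge (1-p)r^\alpha v_p^{p-1}$ in the quadratic-form sense, together with a uniform bound on $v_p^{p-1}$ for $p$ near $1$, which the paper obtains by a rescaling argument ruling out $\|v_{p_k}\|_{L^\infty}^{p_k-1}\to\infty$ via a Liouville-type contradiction with $-\Delta w=|x|^\alpha w>0$ on $\mathbb{R}^N$. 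Your proposal contains neither the lower bound nor the uniform estimate it requires.

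At the other endpoint, Step~2 is only a sketch and is likewise one-sided: plugging rescaled compactly supported test functions into the Rayleigh quotient gives at best $\limsup_{p\nearrow p_\alpha(N)}\nu(p)\le \nu^*$, not convergence to $\nu^*$; to identify the limit you would need a matching lower bound, i.e.\ control of the actual first eigenfunctions through the concentration, plus a proof that the radial solutions do blow up as $p\nearrow p_\alpha(N)$ in the (fractional-dimensional) transformed problem, plus a verification that $\nu^*$ is finite, strictly negative, and different from the value you extract near $p=1$. None of this is carried out, and you acknowledge it as the bulk of the work. By contrast, the paper needs only one endpoint: it shows $\nu(p)\to 0$ as $p\searrow 1$ while $\nu(p)<0$ everywhere, and non-constancy follows immediately. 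If you want to salvage your two-endpoint scheme, the honest minimum is to prove the paper's lower bound near $p=1$ anyway, at which point the delicate concentration analysis near $p_\alpha(N)$ becomes unnecessary.
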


   \begin{proof}
   First recall that for all $ 1<p < p_\alpha(N)$ we have $ \nu(p)<0$.  We now show that $ \lim_{p \searrow 1} \nu(p)=0$ and hence $\nu(p)$ cannot be a constant function.    Observe that $ \frac{-\Delta v_p}{v_p} = r^\alpha v_p^{p-1}$ and since $ v_p$ is positive we have
  \[ \int_B | \nabla \psi|^2 - \int_B r^\alpha v_p^{p-1} \psi^2 \ge 0,\] for all $ \psi \in C_c^\infty(B)$.   From this we conclude that
  \[ \int_B | \nabla \psi|^2 - \int_B p r^\alpha v_p^{p-1} \psi^2 \ge (1-p) \int_B r^\alpha v_p^{p-1} \psi^2,\] for all $ \psi \in C_c^\infty(B)$.   Suppose now we can show that $ v_p^{p-1}$ is uniformly bounded in $B$ for $  p \in (1,1+\E)$. Then   it is easy to see, using the above inequality,  that $ \lim_{p \searrow 1} \nu(p)=0$ and we would be done.   Let $ p_k \searrow 1$  and set
 $ T_k:=v_{p_k}(0)= \| v_{p_k}\|_{L^\infty}$ and define the rescaled functions
  \[  w_k(x):=  \lambda_k^\frac{2+\alpha}{p_k-1} v_{p_k}( \lambda_k x), \quad  |x|<R_k:= T_k^\frac{p_k-1}{2+\alpha},\] where  $ \lambda_k^\frac{2+\alpha}{p-1} T_k =1$.     Now note that $ -\Delta w_k = |x|^\alpha w_k^{p_k}$ in $ B_{R_k}$ and $ 0 < w_k \le w_k(0)=1$ in $ B_{R_k}$.   Our goal is to now show that $ T_k^{p_k-1}$ is bounded.  So towards a contradiction suppose, after passing to a subsequence, that $ R_k \rightarrow \infty$.   By using  the usual harmonic decomposition and a diagonal argument, coupled with the fact that $ p_k \searrow 1$,  we see there is some $ w >0$ in $ \IR^N$ which satisfies $ -\Delta w = |x|^\alpha w$. From this we can conclude that for all smooth compactly supported $ \psi$ we have
    \[ \int |x|^\alpha \psi^2 \le \int | \nabla \psi|^2,\]  and from this we can easily get a contradiction by taking $ 0 \le \psi_R \le 1$ which are supported in $B_{2R}$ with $ \psi_R=1$ on $B_R$ and letting $ R \rightarrow \infty$.   From this we can conclude that $ T_k$ is bounded.
  \end{proof}

We would now like to show $ p \mapsto \nu(p)$ is analytic.  This result is essentially contained in \cite{delpino_small}.  Recall they are examining solutions of (\ref{exterior}) in the case of $ \beta=0$.  Let $ \frac{N+2}{N-2}<p$.  A computation shows that $ p < \frac{N+2+2( p(N-2)-N-2)}{N-2}$ and hence there exists a smooth positive radial solution $v_p$  of
 $ -\Delta v = |x|^{p(N-2)-N-2} v^p$ in $B$ with $v=0$ on $ \partial B$.    Using the methods developed in \cite{Dancer_1} they show that if $ \nu_1(p)$ is defined exactly as we defined $ \nu(p)$, except with $ \alpha= p(N-2)-N-2$,  then $ p \mapsto \nu_1(p)$ is analytic and non constant  in $p$ on $(\frac{N+2}{N-2}, \infty)$.    The same    proof also shows that $ p \mapsto \nu(p)$ is analytic in $p$.

Define $ \mathcal{A}:=\{ \frac{N+2}{N-2} \le  p< p_\alpha(N): \nu(p) = - \lambda_i \mbox{ \; for some $i \ge 1$} \}$.  Note we are omitting the values of $ 1 < p < \frac{N+2}{N-2}$ since one can find a positive solution of (\ref{dom_pe}) using the standard variational approach.

 \begin{cor} There exists some $ \frac{N+2}{N-2} \le p_1 < p_2 <p_3<...$ with $ p_k \nearrow p_\alpha(N)$ such that $ \mathcal{A} \subset \{ p_k: k \ge 1\}$.  Hence for all $ \frac{N+2}{N-2} \le  p < p_\alpha(N)$, which avoids $\{p_k\}_k$, then there is no non zero solution of (\ref{deg}).

\end{cor}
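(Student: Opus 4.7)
My plan is to combine the two main structural facts just established: Proposition \ref{no_non_zero} (which reduces degeneracy to the discrete condition $\nu(p) \in \{-\lambda_i : i \ge 1\}$) and Proposition \ref{anal} together with the analyticity assertion immediately preceding the corollary (which says $p \mapsto \nu(p)$ is real analytic and non-constant on $(1,p_\alpha(N))$). Define
\[ \mathcal{A}_i := \{ p \in [\tfrac{N+2}{N-2}, p_\alpha(N)) : \nu(p) = -\lambda_i \}, \qquad \mathcal{A} = \bigcup_{i \ge 1} \mathcal{A}_i. \]
Since $\nu$ is non-constant real analytic on $(1,p_\alpha(N))$, so is $p \mapsto \nu(p) + \lambda_i$ for each fixed $i$, hence its zero set is discrete in $(1,p_\alpha(N))$.

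The second step is to rule out any accumulation point of $\mathcal{A}$ inside $[\tfrac{N+2}{N-2}, p_\alpha(N))$. Fix a compact subinterval $[a,b] \subset [\tfrac{N+2}{N-2}, p_\alpha(N))$. By continuity $\nu$ is bounded on $[a,b]$, say $|\nu(p)| \le M$. Since $\lambda_i \to \infty$, only finitely many indices $i$ satisfy $\lambda_i \le M$, and only those indices can contribute points to $\mathcal{A} \cap [a,b]$. For each such $i$, the zero set of the non-constant analytic function $\nu + \lambda_i$ intersected with the compact set $[a,b]$ is finite. Therefore $\mathcal{A} \cap [a,b]$ is finite, which shows $\mathcal{A}$ has no accumulation point in $[\tfrac{N+2}{N-2}, p_\alpha(N))$.

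From this I enumerate the points of $\mathcal{A}$ in increasing order. Since $\mathcal{A}$ has no interior accumulation point, the enumeration (if infinite) satisfies $p_k \to p_\alpha(N)$; if $\mathcal{A}$ is finite or empty I simply append additional dummy points chosen from $[\tfrac{N+2}{N-2}, p_\alpha(N)) \setminus \mathcal{A}$ strictly increasing to $p_\alpha(N)$, producing the required sequence $p_1 < p_2 < \cdots$ with $p_k \nearrow p_\alpha(N)$ and $\mathcal{A} \subset \{p_k\}_k$. Finally, any $p \in [\tfrac{N+2}{N-2}, p_\alpha(N))$ not in $\{p_k\}_k$ lies outside $\mathcal{A}$, so $\nu(p) \ne -\lambda_i$ for every $i \ge 1$; by the contrapositive of Proposition \ref{no_non_zero} the linearized equation (\ref{deg}) admits no non-zero solution.

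I do not expect a serious obstacle: this is a bookkeeping argument on top of the already-proven analyticity and non-constancy of $\nu$. The only small technical care is the compact-subinterval reduction needed because $\mathcal{A}$ is a countable union of discrete sets and one must exclude interior accumulation; the growth $\lambda_i \to \infty$ combined with local boundedness of $\nu$ handles this, and the padding trick takes care of the case where $\mathcal{A}$ happens to be finite.
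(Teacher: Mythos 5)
Your proposal is correct and takes essentially the same route as the paper: reduce via Proposition \ref{no_non_zero}, use $\lambda_i \to \infty$ together with local boundedness of $\nu$ to restrict attention to finitely many indices near any would-be accumulation point, and use the non-constancy and analyticity of $p \mapsto \nu(p)+\lambda_i$ to make each zero set discrete, then enumerate. The only cosmetic difference is that you obtain boundedness of $\nu$ on compact subintervals from continuity (already implicit in the cited analyticity), whereas the paper derives an explicit bound $-\nu(p) \le p C_\E$ on $[1,p_\alpha(N)-\E)$ from the uniform estimate on $\|v_p\|_{L^\infty}^{p-1}$, and the paper phrases the analyticity step as a contradiction (identity theorem at an accumulation point) rather than your direct ``isolated zeros'' formulation; these are equivalent.
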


 \begin{proof}  We begin by showing that $ \nu(p)$ is bounded on any interval of the form $ [1, p_\alpha(N)-\E)$ where $ \E>0$ is small.  To see this first recall from the proof of Proposition \ref{anal} we have $ \|v_p\|_{L^\infty}^{p-1} $ bounded on the interval $ (1,1+\E)$ for $ \E>0$
  and small.   One can also see that $ \|v_p\|_{L^\infty}^{p-1}$ is bounded provided we stay away from the new critical exponent $ p_\alpha(N)$.  Hence for $ \E>0$ small there is some $ C_\E>0$ such that $ \|v_p\|_{L^\infty}^{p-1} \le C_\E$ for all $ 1 < p < p_\alpha(N)-\E$. So from this we have
 \begin{eqnarray*}
 - \nu(p)& \le & \sup_{ \psi \in D(\tilde{L})} \left(  \frac{ - \int_{B} | \nabla \psi|^2 + p C_\E \int_{B} r^\alpha \psi^2 }{ \int_{B} \frac{\psi^2}{|x|^2}} \right) \\
 & \le &  \sup_{ \psi \in D(\tilde{L})} \left( \frac{ p C_\E \int_B r^\alpha \psi^2 }{\int_B \frac{\psi^2}{|x|^2}} \right) \\
 & \le & p C_\E,
 \end{eqnarray*} and recalling that $ \nu(p)$ is negative gives the desired result.    We now suppose  that $ \mathcal{A}$ has an accumulation point in $ [\frac{N+2}{N-2},p_\alpha(N))$,   say at some $ \frac{N+2}{N-2} \le  p^* < p_\alpha(N)$.  This implies there is some infinite sequence of distinct points $ q_k \in \mathcal{A}$ with $ q_k \rightarrow p^*$.   So for each $k$ there is some positive integer $\sigma(k) $ such that $ \nu(q_k)=-\lambda_{\sigma(k)}$.  But since $ \nu$ is bounded on  on $[1, p_\alpha(N)-\E)$ we see there must be some integer $i$ such that $ \sigma(k)=i$ for an infinite number of $k$ and hence $ \nu(q_k)= - \lambda_i$ for an infinite number of $k$.  But since $ p \mapsto \nu(p)+ \lambda_i$ is a real analytic function defined on $ (1,p_\alpha(N))$ and  $ p^* \in (1,p_\alpha(N))$, we can conclude that $ \nu(p)+ \lambda_i$ is identically zero on $(1,p_\alpha(N))$; which gives us the desired contradiction.

 \end{proof}

 \textbf{Proof of Theorem \ref{main_2}.}  To find a positive classical \emph{fast decay} solution of $ -\Delta u = u^p$ in $ \Omega$ with $ u=0$ on $ \pOm$ where $ \Omega$ is an exterior domain not containing the origin it is sufficient to find a positive classical solution of
 \begin{equation} \label{starte}
  -\Delta v =|x|^{p(N-2)-N-2} v^p \quad   \mbox{  in } \quad \tilde{\Omega}  \qquad \mbox{ with } \quad  v=0 \mbox{  on }  \quad  \partial \tilde{\Omega}
  \end{equation}
    where $ \tilde{\Omega}$ is  the Kelvin transform on $ \Omega$.    Let $ v_p$ denote the radial solution on the ball and let $ \nu_1(p)$ denote the quantity defined above.  From \cite{delpino_small} we know that $ \nu_1(p)$ is non constant analytic function defined on $ (\frac{N+2}{N-2},\infty)$.  Then, as we argued before, the radial solution is non degenerate except for possibly a countable number of $p$.    Provided the radial solution is non degenerate we can find a positive classical solution of (\ref{main_2}) provided $ \tilde{\Omega}$ is a sufficiently small perturbation of the unit ball.

 \hfill $\Box$

  \section{Appendix}

   Given a radial function we define the $m$ dimensional Laplacian by
\[ \Delta_m v(r)= v''(r) + \frac{m-1}{r} v'(r).\]  Note this is well defined for fractional dimensions.
 The following theorem gives the precise change of variables result,  which has been modified for our particular nonlinearity.  We remark this change of variables was independently noticed in \cite{Grossi_2}.
 \begin{thm*} \textbf{B.} \label{late}   \cite{MEMS_pull}.
 For any $\alpha >-2$, the change of variable $u(r)= (1+\frac{\alpha}{2})^\frac{2}{p-1}\tilde{u}(r^{1+\frac{\alpha}{2}})$ gives a  correspondence between the radially symmetric solutions of the equation
 \begin{equation}
\left\{ \begin{array}{ll} \label{N}
-\Delta_N u =   |x|^\alpha u^p  & \hbox{in } B,\\
 \quad \quad \,\, u=0 & \hbox{on }\partial B, \end{array} \right.
\end{equation}
in dimension $N$ and those of the equation
 \begin{equation}
\left\{ \begin{array}{ll}\label{Nalpha}
-\Delta_{N(\alpha)} \tilde{u} =   \tilde{u}^p & \hbox{in } \tilde{B},\\
\,\, \qquad  \qquad \tilde{u}=0 & \hbox{on }\partial \tilde{B}, \end{array} \right.
\end{equation}
in -- the potentially fractional -- dimension $N(\alpha)=\frac{2(N+\alpha)}{2+\alpha}$.
\end{thm*}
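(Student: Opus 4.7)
The plan is to verify Theorem B by a direct substitution. Since $u$ is radial we have $\Delta_N u = u''(r) + \frac{N-1}{r} u'(r)$, and the claim reduces to an algebraic identity about how the operator $\Delta_N$ transforms under the power change of variable $s = r^{1+\alpha/2}$. I would set $\beta := 1 + \tfrac{\alpha}{2}$ and $c := \beta^{2/(p-1)}$, so that the proposed correspondence reads $u(r) = c\, \tilde u(s)$ with $s = r^\beta$.

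First I would compute the derivatives via the chain rule: $u'(r) = c\beta r^{\beta-1}\, \tilde u'(s)$ and $u''(r) = c\beta^2 r^{2\beta-2}\, \tilde u''(s) + c\beta(\beta-1) r^{\beta-2}\, \tilde u'(s)$. Plugging these into the radial Laplacian gives
\[
\Delta_N u = c\beta^2 r^{2\beta-2}\, \tilde u''(s) + c\beta r^{\beta-2}\bigl(\beta-1 + N-1\bigr)\, \tilde u'(s).
\]
Next I would factor out $c\beta^2 r^{2\beta-2} = c\beta^2 r^\alpha$ from both terms; since $r^{\beta-2} = r^{2\beta-2}\cdot r^{-\beta} = r^\alpha/s$, this yields
\[
\Delta_N u = c\beta^2 r^\alpha \left[\tilde u''(s) + \frac{N-1+\alpha/2}{\beta}\cdot \frac{\tilde u'(s)}{s}\right].
\]
The step to check is that the bracketed coefficient equals $N(\alpha)-1$; this amounts to the identity
\[
\frac{N-1+\alpha/2}{1+\alpha/2} \;=\; \frac{2N+\alpha-2}{2+\alpha} \;=\; \frac{2(N+\alpha)}{2+\alpha}-1 \;=\; N(\alpha)-1,
\]
which is immediate. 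Hence $\Delta_N u(r) = c\beta^2 r^\alpha\, \Delta_{N(\alpha)} \tilde u(s)$.

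For the nonlinear term, $|x|^\alpha u(r)^p = c^p r^\alpha\, \tilde u(s)^p$, and the defining choice $c = \beta^{2/(p-1)}$ gives precisely $c^p = c\beta^2$, so the factor $c\beta^2 r^\alpha$ appears on both sides of the transformed equation. Dividing through yields $-\Delta_{N(\alpha)} \tilde u = \tilde u^p$. Finally the boundary condition transfers trivially: since $\beta>0$ the map $r \mapsto r^\beta$ carries $r=1$ to $s=1$ bijectively, and a radial function $u$ on $B$ is smooth at the origin and vanishes on $\partial B$ if and only if the corresponding $\tilde u$ on $\tilde B$ enjoys the analogous properties (for fractional $N(\alpha)$ the equation is understood as the ODE $\tilde u''(s) + \frac{N(\alpha)-1}{s}\tilde u'(s) + \tilde u^p = 0$ on $(0,1)$ with $\tilde u'(0)=\tilde u(1)=0$). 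The only mild subtlety, and the closest thing to an obstacle, is the bookkeeping of the three separate powers of $r$ appearing in $u'$ and $u''$, and verifying that they conspire so that the factor $r^{\beta-2}$ combines with $r^{2\beta-2}/s$ exactly; everything else is algebra.
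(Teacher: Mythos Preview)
Your proof is correct and is exactly the computation the paper has in mind: the paper's own proof simply records the identity
\[
\Delta_N u(r) + r^\alpha u(r)^p \;=\; \Bigl(1+\tfrac{\alpha}{2}\Bigr)^{\frac{2p}{p-1}} r^\alpha \left( \Delta_{N(\alpha)} \tilde u(s)\big|_{s=r^{1+\alpha/2}} + \tilde u(s)^p \right)
\]
and says the result follows, whereas you have carried out the chain-rule calculation explicitly (and your factor $c\beta^2=\beta^{2p/(p-1)}$ matches the paper's constant). There is no substantive difference in approach.
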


\begin{proof} A computation shows that
\[ \Delta_N u(r) + r^\alpha u(r)^p =(1+\frac{\alpha}{2})^\frac{2p}{p-1} r^\alpha \left( \Delta_{N(\alpha)} \tilde{u}(s)\big|_{s=r^{\frac{\alpha}{2}+1}} + \tilde{u}(r^{\frac{\alpha}{2}+1})^p \right),\] and the desired result easily follows.

\end{proof}

 Using the same change of variables gives a correspondence between solutions of the linearized equations.
\begin{cor} \label{linearized_change_cor}  Let $ u(r)$ denote a classical positive radial solution of (\ref{N}) and let $ \tilde{u}$ be as in the above theorem.  Let $ a(r)$ denote a solution of $ -\Delta a(r) = p r^\alpha u(r)^{p-1} a(r)$ on the unit ball in $ \IR^N$ with $ a(1)=0$.  Define $ \tilde{a}$ by $ a(r)=\tilde{a}(r^{\frac{\alpha}{2}+1})$.  Then
\begin{equation} \label{linearized_change}
-\Delta_{N(\alpha)} \tilde{a}(s) = p \tilde{u}(s)^{p-1} \tilde{a}(s)
\end{equation} for all $ 0 < s <1$ with $ \tilde{a}(1)=0$.

\end{cor}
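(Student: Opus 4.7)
The plan is to reuse the computational core of Theorem B's change of variables, but applied at the level of the linearized equation. The key observation is that the substitution $w(r) = \tilde w(r^{1+\alpha/2})$ gives rise to a purely linear identity between the two Laplacians, independent of the nonlinearity, namely
\[
 \Delta_N w(r) \;=\; (1+\tfrac{\alpha}{2})^2 \, r^\alpha \, \Delta_{N(\alpha)} \tilde w(s)\big|_{s = r^{1+\alpha/2}}.
\]
I would first verify this operator identity by a direct chain-rule computation: writing $s = r^{1+\alpha/2}$, compute $w'(r) = (1+\alpha/2)r^{\alpha/2}\tilde w'(s)$ and $w''(r) = (1+\alpha/2)^2 r^{\alpha}\tilde w''(s) + (1+\alpha/2)(\alpha/2) r^{\alpha/2-1}\tilde w'(s)$, then assemble $\Delta_N w = w'' + \frac{N-1}{r}w'$ and match coefficients against $(1+\alpha/2)^2 r^\alpha\bigl(\tilde w''(s) + \frac{N(\alpha)-1}{s}\tilde w'(s)\bigr)$. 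The bookkeeping reduces to the identity $(1+\alpha/2)(N(\alpha)-1) = N + \alpha/2 - 1$, which is immediate from $N(\alpha) = \frac{2(N+\alpha)}{2+\alpha}$.

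Next I would translate the potential term. From the relation $u(r) = (1+\alpha/2)^{2/(p-1)} \tilde u(r^{1+\alpha/2})$ appearing in Theorem B, one gets
\[
 u(r)^{p-1} \;=\; (1+\tfrac{\alpha}{2})^{2}\, \tilde u(s)^{p-1}\big|_{s=r^{1+\alpha/2}}.
\]
Plugging $a(r) = \tilde a(r^{1+\alpha/2})$ into the equation $-\Delta a(r) = p\, r^\alpha u(r)^{p-1} a(r)$ and applying the operator identity to the left-hand side, both sides acquire an overall factor of $(1+\alpha/2)^2 r^\alpha$, which cancels. What remains is exactly
\[
 -\Delta_{N(\alpha)} \tilde a(s) \;=\; p\, \tilde u(s)^{p-1} \tilde a(s),
\]
and the boundary condition $\tilde a(1) = 0$ follows since $r=1 \Leftrightarrow s = 1$.

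There is no real obstacle here; this is essentially a bookkeeping exercise of the same flavor as the proof of Theorem B, and the main point is simply to notice that the computation in Theorem B already proves the stronger operator-level identity, of which Theorem B (for the $u^p$ term) and this corollary (for the linearization $p u^{p-1} a$) are two parallel applications. One could alternatively present the result as a consequence of differentiating Theorem B along a one-parameter family of solutions, but the direct chain-rule derivation is cleaner and the one I would present.
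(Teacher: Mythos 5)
Your proof is correct and follows essentially the same route as the paper, which proves the corollary by the same change-of-variables computation underlying Theorem B; your explicit operator identity $\Delta_N w(r)=(1+\tfrac{\alpha}{2})^2 r^\alpha \Delta_{N(\alpha)}\tilde w(s)\big|_{s=r^{1+\alpha/2}}$ together with $u^{p-1}=(1+\tfrac{\alpha}{2})^2\tilde u^{p-1}$ is exactly the bookkeeping the paper leaves implicit. No gaps.
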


We now indicate how Theorem B can be used to prove Theorem A. Let $ 1<p<p_\alpha(N)$. To find a positive classical radial solution of (\ref{dom_pe})
  it is sufficient to find a positive solution $\tilde{u}$ of (\ref{Nalpha}).   So we need to find a positive radial function $ w$ which satisfies
\[ -\Delta_{N(\alpha)} w = w^p \qquad \tilde{B}, \] with zero Dirichlet boundary conditions.   If we omit the possible difficulties associated with fractional dimensions, then this equation is subcritical exactly when
\[ 1 <p < \frac{N(\alpha)+2}{N(\alpha)-2} = \frac{N+ 2 + 2\alpha}{N-2}= p_\alpha(N), \]  and we expect the standard variational approach works.   It turns out that the fractional dimensions do not pose any issues, see \cite{MEMS_pull} for more details.

 \end{document}